\newcommand{\IR}{\mathbb R}
\newcommand{\w}{\omega}
\newcommand{\diam}{\mathrm{diam}}
\newcommand{\W}{\mathcal W}
\newcommand{\e}{\varepsilon}
\newtheorem{theorem}{Theorem}
\newtheorem{corollary}{Corollary}
\newtheorem{problem}{Problem}
\theoremstyle{definition}
\newtheorem{remark}{Remark}
\begin{document}

\title[Sequential closure of the set of continuous functions]{On the sequential closure of the set of continuous functions\\ in the space of separately continuous functions}
\author{Taras Banakh}
\email{t.o.banakh@gmail.com}
\address{Ivan Franko National University of Lviv (Ukraine) and Jan Kochanowski University in Kielce (Poland)}
\subjclass{54C08, 54C05, 26B05}

\begin{abstract}
For separable metrizable spaces $X,Y$ and a metrizable topological group  $Z$ by $S(X\times Y,Z)$ we denote the space of all separately continuous functions $f:X\times Y\to Z$ endowed with the topology of layer-wise uniform convergence, generated by the subbase consisting of the sets  $[K_X\times K_Y,U]=\{f\in S(X\times Y,Z):f(K_X\times K_Y)\subset U\}$, where $U$ is an open subset of $Z$ and $K_X\subset X$, $K_Y\subset Y$ are compact sets one of which is a singleton. We prove that every separately continuous function $f:X\times Y\to Z$ with zero-dimensional image $f(X\times Y)$ is a limit of a sequence of jointly continuous functions in the topology of layer-wise uniform convergence.
\end{abstract}
\maketitle

In this paper we study the sequential closure $\bar C^s(X\times Y,Z)$ of the set $C(X\times Y,Z)$ of continuous functions in the space $S(X\times Y,Z)$ of separately continuous functions $f:X\times Y\to Z$ defined on the product of topological spaces $X,Y$ with values in a topological space $Z$. The space $S(X\times Y,Z)$ is endowed with the topology of layer-wise compact-open topology, generated by the subbase consisting of the sets
$$[K_X\times K_Y,U]=\{f\in S(X\times Y,Z):f(K_X\times K_Y)\subset U\},$$
where $U\subset Z$ is an open set and $K_X\subset X$, $K_Y\subset Y$ are compact subsets one of which is a singleton. Observe that a function sequence $(f_n)_{n\in\w}$ converges to a function $f_\infty:X\times Y\to Z$ in the space $S(X\times Y,Z)$ if and only if for any points $x\in X$, $y\in Y$ the function sequence $\big(f_n(x,\cdot)\big)_{n\in\w}$ converges to $f_\infty(x,\cdot)$ in the space $C(Y,Z)$ and the function  sequence  $\big(f_n(\cdot,y)\big)_{n\in\w}$ converges to $f_\infty(\cdot,y)$ in the function space $C(X,Z)$ (endowed with the compact-open topology).

In case of compact spaces $X,Y$ and the real line $Z=\IR$ the space $S(X\times Y,Z)$ was studied by H.A.~Voloshyn, V.K.~Maslyuchenko and O.V.~Maslyuchenko in the papers \cite{VM1}--\cite{VMM2}. One of the problems addressed in these papers was the problem of description of the sequential closure $\bar C^s(X\times Y,Z)$ of the set $C(X\times Y,Z)$ of jointly continuous functions in the space $S(X\times Y,Z)$. By the  {\em sequential closure} $\bar A^s$ of a set $A\subset X$ in a topological space $T$ we understand the set of all limit points $\lim_{n\to \infty}a_n$ of sequences $\{a_n\}_{n\in\w}\subset A$, convergent in the space $T$.
The Tietze-Urysohn Extension Theorem \cite[2.1.8]{En} implies that for any Tychonoff spaces $X,Y$ the set $C(X\times Y,\IR)$ is dense in $S(X\times Y,\IR)$, which motivates the following question.

\begin{problem}\label{prob1} Is $\bar C^s(X\times Y,\IR)=S(X\times Y,\IR)$ for any (zero-dimensional) compact metrizable spaces $X,Y$?
\end{problem}

In \cite{VM3} Voloshyn and Maslyuchenko proved that for metrizable compact spaces  $X,Y$ the set $\bar C^s(X\times Y,\IR)$ contains all separately continuous functions $f:X\times Y\to \IR$ whose discontinuity point set $D(f)\subset X\times Y$ has countable projection on $X$ or $Y$.

In this paper we shall prove that for separable metrizable spaces $X,Y$ and a metrizable topological group $Z$ the set $\bar C^s(X\times Y,Z)$ contains all separately continuous functions $f:X\times Y\to Z$ with zero-dimensional image $f(X\times Y)\subset Z$.

We start with the following useful fact.

\begin{theorem}\label{t1} For separable metrizable spaces $X,Y$ and a topological space $Z$ the set $\bar C^s(X\times Y,Z)$ contains all separately continuous functions $f:X\times Y\to Z$ with discrete image $f(X\times Y)\subset Z$.
\end{theorem}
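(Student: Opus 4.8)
Here is the line of attack I would follow.

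\smallskip

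\noindent\textbf{Step 1 (reduction to the two‑point space).} First I would observe that the image $D:=f(X\times Y)$ is countable: fixing a countable dense set $\{x_m:m\in\w\}\subseteq X$, for every $(x,y)$ the map $f(\cdot,y):X\to Z$ is continuous into the discrete set $D$, so $f(\cdot,y)^{-1}(f(x,y))$ is a nonempty clopen subset of $X$ and meets $\{x_m\}$; hence $D=\bigcup_m f(\{x_m\}\times Y)$ is a countable union of countable sets (each $f(x_m,\cdot)$ is continuous into a discrete space, and $Y$ is separable). Since every jointly continuous function into a discrete space is locally constant, and since for $D$‑valued functions convergence in $S(X\times Y,Z)$ and in $S(X\times Y,D)$ coincide, I may assume $Z=D=\w$. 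Next, for $d\in\w$ put $h_d:=\chi_{f^{-1}(d)}:X\times Y\to\{0,1\}$; each $h_d$ is separately continuous with all sections clopen. If I can realise every $h_d$ as a limit $h_d=\lim_n\chi_{B_{d,n}}$ with $B_{d,n}$ clopen in $X\times Y$, then setting $f_n(p):=\min\{d\le n: p\in B_{d,n}\}$ (and $f_n(p):=0$ otherwise) gives jointly continuous $f_n:X\times Y\to\w$ with $f_n\to f$ in $S(X\times Y,\w)$, by a direct unravelling of the definition of layer‑wise uniform convergence. Thus it suffices to prove: for every $A\subseteq X\times Y$ with all sections $A_x,A^y$ clopen there are clopen $B_n\subseteq X\times Y$ with $\chi_{B_n}\to\chi_A$ in $S(X\times Y,\{0,1\})$.

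\smallskip

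\noindent\textbf{Step 2 (what convergence means, and structure of $A$).} Because the sections are clopen, a sequence of $\{0,1\}$‑valued continuous functions on $Y$ converges in $C(Y,\{0,1\})$ iff it is eventually equal to the limit on every compact set; so $\chi_{B_n}\to\chi_A$ in $S(X\times Y,\{0,1\})$ is equivalent to
\[
(\ast)\quad \forall x\ \forall K\subseteq Y\text{ compact}\ \exists N\ \forall n\ge N:\ (B_n)_x\cap K=A_x\cap K,
\]
together with the symmetric condition $(\ast\ast)$ in the other variable. I would also record two facts: since one factor is metrizable, $\chi_A$ is of the first Baire class, so its continuity points are comeager, equivalently $\partial A:=(X\times Y)\setminus(\mathrm{int}\,A\cup\mathrm{int}((X\times Y)\setminus A))$ is closed and nowhere dense; and if $X$ or $Y$ is connected (say $Y$), then each $A_x\in\{\emptyset,Y\}$, so $A=A^{y_0}\times Y$ is already clopen. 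So the substantive case is when $X\times Y$ has many clopen sets — automatic once $X,Y$ are zero‑dimensional, in which case $X\times Y$ is zero‑dimensional and any two disjoint closed subsets of it are separated by a clopen set.

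\smallskip

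\noindent\textbf{Step 3 (the construction).} Fix compatible metrics $\le 1$ on $X$ and $Y$. The plan is to build $B_n$ as a union of ``small'' clopen boxes that agrees with $A$ wherever the value of $\chi_A$ is already determined at scale $1/n$. At stage $n$ I would take clopen partitions $\mathcal P_n$ of $X$ and $\mathcal Q_n$ of $Y$ of mesh $<1/n$ refining the finite partitions generated by $\{A^{y_k}:k<n\}$ and $\{A_{x_k}:k<n\}$ respectively; then, using the Kuratowski–Ulam theorem, choose representatives of the cells so that every chosen point of every box $P\times Q$ ($P\in\mathcal P_n$, $Q\in\mathcal Q_n$) is a continuity point of $\chi_A$ (first pick $X$‑representatives over which $\chi_A$ is vertically generically continuous, then $Y$‑representatives inside the resulting comeager set); finally declare $\chi_{B_n}\equiv 1$ on $P\times Q$ iff $\chi_A$ equals $1$ at the chosen point of $P\times Q$, so $B_n$ is clopen. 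To verify $(\ast)$ for a fixed $x$ and compact $K$, I would first replace each $y\in K$ by its representative $b$ (licit for large $n$ since $\chi_{A_x}$ is uniformly continuous near the compact $K$) and then replace $x$ by its representative $a$ (licit since, after refining by the finitely many sets $A^{y_k}$, $\chi_A(\cdot,b)$ is constant on the $\mathcal P_n$‑cell of $x$ for every $b$ coming from a box meeting $K$); $(\ast\ast)$ is symmetric.

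\smallskip

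\noindent\textbf{Expected main obstacle.} The delicate point is precisely the ``replace $x$ by its representative'' step (and its mirror in $(\ast\ast)$). Over a joint discontinuity point of $\chi_A$ lying above $K$, no small clopen box inside a single $\mathcal P_n$‑cell is monochromatic, so one cannot simply invoke continuity of $\chi_A$; the argument must genuinely combine the metric shrinking of the boxes, the continuity of the one‑variable sections, and the refinement of $\mathcal P_n$ by $A^{y_0},\dots,A^{y_{n-1}}$ to show that the finitely many boxes meeting $\{x\}\times K$ are, from some stage on, decided correctly — and in particular that a \emph{single} threshold $N$ works uniformly over all of $K$. Getting this book‑keeping right (and, if the product‑grid version turns out to be insufficient, replacing it by a suitably ``one‑variable at a time'' variant, or by the separation of the $F_\sigma$ sets $A\setminus\partial A$ and $(X\times Y\setminus A)\setminus\partial A$ by clopen sets in the zero‑dimensional space $X\times Y$) is where the real work lies; everything else is routine.
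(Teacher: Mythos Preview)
Your route diverges from the paper's and, as you yourself flag, Step~3 does not close.

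\textbf{The ``replace $x$ by its representative'' step fails as written.} You need $\chi_A(\cdot,b_Q)$ to be constant on the cell $P\ni x$, and you justify this by having refined $\mathcal P_n$ by $\{A^{y_k}:k<n\}$. But your $b_Q$ was chosen (via Kuratowski--Ulam) as a continuity point of $\chi_A$, \emph{not} as one of the $y_k$; refining by the $A^{y_k}$ tells you nothing about the section $A^{b_Q}$. Joint continuity of $\chi_A$ at the single grid point $(a_P,b_Q)$ does not control the value at the off-grid point $(x,b_Q)$ for your \emph{given} $x$, which may sit over a discontinuity. The natural repair---take $b_Q$ among the $y_k$---clashes with the continuity-point requirement and with the bookkeeping (the first $n$ terms of a dense sequence need not meet every $\mathcal Q_n$-cell). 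One can salvage the scheme by dropping Kuratowski--Ulam entirely, choosing the $b_Q$'s from the dense sequence \emph{first}, and only then refining the partition of the other factor by the corresponding sections; but you have not supplied this, and it still only works under the next restriction.

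\textbf{Zero-dimensionality is a genuine restriction.} Clopen partitions of arbitrarily small mesh exist only for zero-dimensional $X,Y$. You note that the connected case is trivial, but ``connected'' and ``zero-dimensional'' do not exhaust separable metrizable spaces, so your dismissal of the general case as ``non-substantive'' is unjustified.

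\textbf{What the paper does instead.} The paper neither reduces to $\{0,1\}$ nor uses grids or continuity points. Fixing countable bases $\{U_k\}$ of $X$ and $\{V_k\}$ of $Y$, it defines the \emph{closed} set $X(z,k)=\{x\in X:\{x\}\times V_k\subset f^{-1}(z)\}$ (and symmetrically $Y(z,k)$), so that $X(z,k)\times\overline V_k\subset f^{-1}(z)$. Then
\[
XY(z,n)=\bigcup_{k\le n}\bigl(X(z,k)\times\overline V_k\bigr)\cup\bigl(\overline U_k\times Y(z,k)\bigr)
\]
is closed in $X\times Y$ and contained in $f^{-1}(z)$; the sets $\{XY(z,n):z\in Z_n\}$ are pairwise disjoint, and one takes any continuous $g_n:X\times Y\to Z$ with $XY(z,n)\subset g_n^{-1}(z)$. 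Convergence is now immediate and uniform with no bookkeeping: given $x$ and compact $K\subset Y$, each compact piece $K\cap f(x,\cdot)^{-1}(z)$ is covered by finitely many basic $V_k\subset f(x,\cdot)^{-1}(z)$; once $n$ exceeds all those indices (and $Z_n\supset f(\{x\}\times K)$), one has $\{x\}\times K\subset\bigcup_{z}XY(z,n)$, hence $g_n=f$ on $\{x\}\times K$. The decisive idea you are missing is to exhaust each $f^{-1}(z)$ \emph{from inside} by closed product-sets built from the basis, rather than to approximate it by clopen boxes from a small-mesh grid; this removes both the zero-dimensionality hypothesis and the uniform-threshold difficulty at one stroke.
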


\begin{proof} Fix a separately continuous function $f:X\times Y\to Z$ with discrete image $f(X\times Y)\subset Z$. We lose no generality assuming that $Z=f(X\times Y)$. The separate continuity of $f$ implies that for any countable dense subsets $D_X\subset X$ and $D_Y\subset Y$ the set $f(D_X\times D_Y)$ is dense in the discrete space $Z=f(X\times Y)$, which implies that $Z=f(D_X\times D_Y)$ is countable and hence can be written as the countable union $Z=\bigcup_{n\in\w}Z_n$ of a non-decreasing sequence $(Z_n)_{n\in\w}$ of finite sets.

The second-countable spaces $X,Y$ have countable bases $\{U_n\}_{n\in\w}$ and $\{V_n\}_{n\in\w}$ of the topology.

For any point $z\in Z$ and a number $k\in\w$ consider the sets
$$
\begin{aligned}
X(z,k)&=\{x\in X:\{x\}\times V_k\subset f^{-1}(z)\}=\{x\in X:\{x\}\times \overline{V}_k\subset f^{-1}(z)\},\\
Y(z,k)&=\{y\in Y:U_k\times \{y\}\subset f^{-1}(z)\}=\{y\in Y:\overline{U}_k\times \{y\}\subset f^{-1}(z)\}.
\end{aligned}
$$
We claim that the set $X(z,k)$ is closed in $X$. Indeed, if $x\in X\setminus X(z,k)$, then $f(x,v)\ne z$ for some $v\in V_k$. The separate continuity of $f$ yields a neighborhood $O_x\subset X$ of $x$ such that $f(O_x\times\{v\})\subset Z\setminus \{z\}$, which implies that $O_x\cap X(z,k)=\emptyset$. By analogy we can prove that the sets $Y(z,k)$ are closed in $Y$.

It follows that for any number $n\in\w$ the set $$XY(z,n)=\bigcup_{k\le n}(X(z,k)\times \overline{V}_k)\cup(\overline{U}_k\times Y(z,k))$$is closed in $X\times Y$ and is contained in the preimage $f^{-1}(z)$.
Consequently, the family of closed sets $\big(XY(z,n)\big)_{z\in Z}$ is disjoint. Using the normality of the product $X\times Y$, we can construct a continuous function $g_n:X\times Y\to Z$ such that $XY(z,n)\subset g_n^{-1}(z)$ for all $z\in Z_n$.

It remains to check that the sequence $(g_{n})_{n\in\w}$ converges to $f$ in the space $S(X\times Y,Z)$. Given a neighborhood $O_f\subset S(X\times Y,Z)$, it suffices to find a number $m\in \w$ such that $g_n\in O_f$ for all $n\ge m$. We lose no generality assuming that  $O_f$ is a subbasic neighborhood of the form $[K_X\times K_Y,W]=\{g\in S(X\times Y,Z):g(K_X\times K_Y)\subset W\}$ for some some set $W\subset Z$ and some compacts sets $K_X\subset X$, $K_Y\subset Y$, one of which is a singleton.
Since the set $f(K_X\times K_Y)\subset W\subset Z$ is compact and hence finite in the discrete space $Z$, we can replace $W$ by the image $f(K_X\times K_Y)$ and assume that $W=f(K_X\times K_Y)$ is a finite (open) set in $X$.

First we consider the case of singleton set $K_X=\{x\}$. Consider the continuous map $f_x:Y\to Z$, $f_x:y\mapsto f(x,y)$, and observe that the set $f_x^{-1}(W)$ is closed-and-open in $Y$ and contains the compact set $K_Y$. For every $z\in W$ find a finite subset $F_z\subset\w$ such that $K_Y\cap f_x^{-1}(z)\subset \bigcup_{k\in F_z}V_k\subset f_x^{-1}(z)$. Choose a number $m\in\w$ such that $W\subset Z_m$ and $\bigcup_{z\in W}F_z\subset \{0,\dots,m\}$. Then for every $n\ge m$ and $z\in W$ we get $\{x\}\times (K_Y\cap f_x^{-1}(z))\subset XY(z,n)\subset g_n^{-1}(z)$ and hence  $$K_X\times K_Y=\{x\}\times K_Y\subset \bigcup_{z\in W}\{x\}\times f_x^{-1}(z)\subset \bigcup_{z\in W}g_n^{-1}(z)=g_n^{-1}(W),$$which implies $g_n\in O_f=[K_X\times K_Y,W]$ for all $n\ge m$.

By analogy we can treat the case of singleton set $K_Y$.
\end{proof}

Recall that a topological space $Z$ is {\em zero-dimensional} if closed-and-open sets form a base of the topology of the space $Z$.

\begin{theorem}\label{t2} For separable metrizable spaces $X,Y$ and a metrizable topological group $G$, the set $\bar C^s(X\times Y,Z)$ contains all separately continuous functions $f:X\times Y\to G$ with zero-dimensional image $f(X\times Y)\subset G$.
\end{theorem}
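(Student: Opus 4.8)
\emph{Proof proposal.} The plan is to reduce to Theorem~\ref{t1} by a telescoping trick that uses the group operations of $G$. Set $Z:=f(X\times Y)$. Exactly as at the start of the proof of Theorem~\ref{t1}, the separate continuity of $f$ and the second countability of $X,Y$ show that $Z$ is separable, so $Z$ is a zero-dimensional separable metrizable space. Fix a left-invariant metric $d$ on $G$ (Birkhoff--Kakutani) generating its topology, and write $d$ also for its restriction to $Z$. Using zero-dimensionality and the Lindel\"of property of $Z$, for each $n\in\w$ I would choose a countable partition $\mathcal P_n$ of $Z$ into clopen sets of $d$-diameter $\le 2^{-n}$, pick a point $z_P\in P$ for every $P\in\mathcal P_n$, and let $s_n:Z\to Z$ be the locally constant map with $s_n(z)=z_P$ for $z\in P\in\mathcal P_n$; then $d(s_n(z),z)\le 2^{-n}$ for all $z\in Z$. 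Put $\phi_n:=s_n\circ f:X\times Y\to Z$. Each $\phi_n$ is separately continuous (a continuous map composed with $f$) and $d(\phi_n(p),f(p))\le 2^{-n}$ for all $p$, so $\phi_n\to f$ uniformly, hence in $S(X\times Y,G)$.

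The tempting shortcut --- ``each $\phi_n$ has discrete image, so $\phi_n\in\bar C^s$ by Theorem~\ref{t1}, and $f$ is a uniform limit of the $\phi_n$, hence $f\in\bar C^s$'' --- is not valid, because $\bar C^s$ need not be sequentially closed and $S(X\times Y,G)$ need not be metrizable, so no diagonalization is available. This is precisely the step where the group hypothesis earns its keep, and I would get around it by telescoping. Set $u_0:=\phi_0$ and $u_n:=\phi_{n-1}^{-1}\phi_n$ (pointwise inverse and product in $G$) for $n\ge1$, so that $\phi_n=u_0u_1\cdots u_n$ for every $n$. Left-invariance of $d$ gives $d(u_n(p),1_G)=d(\phi_n(p),\phi_{n-1}(p))\le 2^{-n}+2^{-(n-1)}<2^{-(n-2)}$ for $n\ge1$. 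Each $u_n$ is separately continuous (as $G$ is a topological group) and has the form $u_n=t_n\circ f$ for a locally constant map $t_n:Z\to G$ (namely $t_0=s_0$ and $t_n(z)=s_{n-1}(z)^{-1}s_n(z)$ for $n\ge1$); being locally constant, $t_n$ factors as $t_n=\bar t_n\circ q_n$, where $q_n:Z\to D_n$ is the quotient map onto a countable \emph{discrete} space $D_n$ (the set of blocks of some countable clopen partition of $Z$ on which $t_n$ is constant). Hence $u_n=\bar t_n\circ(q_n\circ f)$, where $q_n\circ f:X\times Y\to D_n$ is separately continuous with discrete image.

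Now I would apply Theorem~\ref{t1} to each $q_n\circ f$, obtaining continuous $D_n$-valued maps $\psi_{n,m}$ converging to $q_n\circ f$ in $S(X\times Y,D_n)$ as $m\to\infty$, and put $v_{n,m}:=\bar t_n\circ\psi_{n,m}$. Then $v_{n,m}$ is continuous, $v_{n,m}\to u_n$ in $S(X\times Y,G)$, and $v_{n,m}$ takes values in $\bar t_n(D_n)=t_n(Z)=u_n(X\times Y)$, which for $n\ge1$ is contained in the ball $B_d(1_G,2^{-(n-2)})$. The point of routing through the discrete $D_n$ is that, $D_n$ being discrete, convergence of $\psi_{n,m}$ to $q_n\circ f$ in $S$ forces, for every $x\in X$ and every compact $K\subset Y$, the \emph{exact} equality $\psi_{n,m}(x,\cdot)|_K=q_n(f(x,\cdot))|_K$ for all $m$ past some $M(n,x,K)$; applying $\bar t_n$, the same holds for $v_{n,m}$ and $u_n$ on $K$, and symmetrically with the roles of $X$ and $Y$ exchanged. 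Finally set $g_m:=v_{0,m}v_{1,m}\cdots v_{m,m}$, a finite product of continuous $G$-valued functions, hence continuous.

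It remains to verify $g_m\to f$ in $S(X\times Y,G)$; by symmetry it suffices, given $x\in X$, a compact $K\subset Y$ and $\e>0$, to find $M$ with $\sup_{y\in K}d(g_m(x,y),f(x,y))<\e$ for all $m\ge M$. Choose $N$ with $2^{-(N-3)}<\e$ and take $m\ge\max\{N,M(0,x,K),\dots,M(N,x,K)\}$. For $y\in K$ the first $N+1$ factors of $g_m(x,y)$ coincide with $u_0(x,y)\cdots u_N(x,y)=\phi_N(x,y)$, so $g_m(x,y)=\phi_N(x,y)\,w$ with $w:=v_{N+1,m}(x,y)\cdots v_{m,m}(x,y)$; by left-invariance $d(w,1_G)\le\sum_{n>N}d(v_{n,m}(x,y),1_G)<\sum_{n>N}2^{-(n-2)}=2^{-(N-2)}$. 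Writing $f(x,y)=\phi_N(x,y)\,\delta$ with $d(\delta,1_G)=d(f(x,y),\phi_N(x,y))\le 2^{-N}$ and using left-invariance once more, $d(g_m(x,y),f(x,y))=d(w,\delta)\le d(w,1_G)+d(1_G,\delta)<2^{-(N-2)}+2^{-N}<\e$. Hence $g_m\to f$ in $S(X\times Y,G)$, so $f\in\bar C^s(X\times Y,G)$. I expect the only genuinely nontrivial point to be this telescoping device that converts ``$\phi_n\to f$ with each $\phi_n\in\bar C^s$'' into ``$f\in\bar C^s$'', and it is exactly here that both the group structure and the left-invariance of $d$ are used; everything else is bookkeeping around Theorem~\ref{t1}.
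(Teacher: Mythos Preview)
Your proof is correct and follows essentially the same strategy as the paper's: approximate $f$ by locally constant compositions $\phi_n=s_n\circ f$, telescope $\phi_n=u_0\cdots u_n$ so that each $u_n$ has discrete image contained in a small ball around $1_G$, apply Theorem~\ref{t1} to each factor to get continuous approximants $v_{n,m}$ with the same range control, and show the diagonal product $v_{0,m}\cdots v_{m,m}$ converges to $f$ in $S(X\times Y,G)$ by splitting into a head that stabilizes and a tail whose factors are uniformly small. The only cosmetic differences are that the paper builds its retractions $r_n$ inductively (via maximal separated sets $E_{n+1}$) rather than from independent clopen partitions, and in the final estimate the paper uses a metric bound where you use the slightly sharper observation that $S$-convergence to a discrete-valued map forces eventual exact equality on each compact layer.
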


\begin{proof} Fix any separately continuous function $f:X\times Y\to G$ with a zero-dimensional image $f(X\times Y)$. By Birkhof-Kakutani Theorem \cite[3.3.12]{AT}, the topology of the metrizable topological group $G$ is generated by a bounded left-invariant metric $d$. Multiplying $d$ by a small positive constant, we can assume that $\diam(G)=\sup\{d(x,y):x,y\in G\}\le \frac12$. For a positive real $\e$ denote by $B[\e]=\{g\in G:d(g,1_G)\le\e\}$ the closed $\e$-ball centered at the unit $1_G$ of the group $G$.
For every $n\in\w$ choose a maximal subset $E_n\subset B[\frac1{2^{n-1}}]$, which is {\em $\frac1{2^{n+1}}$-separated} in the sense that $d(x,y)\ge \frac1{2^{n+1}}$ for any distinct points $x,y\in E_n$. It is clear that the set $E_n$ is closed and discrete in $G$.

The separability of the spaces $X,Y$ and the separate continuity of $f:X\times Y\to G$ implies the separability of the image $f(X\times Y)$. Since the metrizable space $Z=f(X\times Y)$ is separable and zero-dimensional, it has covering dimension $\dim(Z)=0$, which allows us to choose a sequence $(\W_n)_{n\in\w}$ of disjoint open covers of the space $Z$ by sets of diameter $\le \frac1{2^{n+1}}$. Replacing each cover $\W_n$ by the finer cover $\{W\cap W':W\in\W_n,\;W'\in\W_{n+1}\}$, we can assume that for every $n\in\w$ the cover $\W_{n+1}$ refines the cover $\W_n$. Since $\diam(Z)\le\frac12$, we can put $\W_0=\{Z\}$.

Inductively we shall construct a sequence  $(r_n)_{n\in\w}$ of continuous maps $r_n:Z\to G$ such that $r_0(Z)=\{1_G\}$ and for every $n\in\w$ the following conditions are satisfied:
\begin{enumerate}
\item[$(1_n)$] for each set $W\in\W_n$ the image $r_n(W)$ is a singleton;
\item[$(2_n)$] $\sup_{z\in Z}d(r_n(z),z)\le 2^{-n}$;
\item[$(3_n)$] $r_{n}(z)\in r_{n-1}(z)\cdot E_{n+1}$ for every $z\in Z$.
\end{enumerate}
We start the inductive construction putting $r_0:Z\to\{1_G\}\subset G$. Assume that for some $n\in\w$ we have constructed a map $r_n:Z\to G$ satisfying the conditions $(1_n)$ and $(2_n)$. For every $W\in\W_n$ denote by $g_W$ the unique point of the set $r_n(W)$. Consider the subfamily $\W_{n+1}(W)=\{W'\in\W_{n+1}:W'\subset W\}$ and for every set $W'\in\W_{n+1}(W)$ choose a point $z_{W'}\in W'$. Observe that  $d(z_{W'},g_W)=d(z_{W'},r_n(z_{W'}))\le \frac1{2^{n}}$ and hence $g_W^{-1}z_{W'}\in B[\frac1{2^{n}}]$. By the maximality of the  $\frac1{2^{n+2}}$-separated set $E_{n+1}\subset B[\frac1{2^{n}}]$, there exists a point $\e_{W'}\in E_{n+2}$ such that $d(\e_{W'},g_W^{-1}z_{W'})<\frac1{2^{n+2}}$. Consider the point $g_{W'}=g_W\cdot \e_{W'}$ and observe that $$d(g_{W'},z_{W'})=d(g_W\cdot \e_{W'},z_{W'})=d(\e_{W'},g_W^{-1}z_{W'})\le \frac1{2^{n+2}}.$$ Define a function $r_{n+1}:Z\to G$ assigning to each point $z\in Z$ the point $g_{W'}$ where $W'\in\W_{n+1}$ is a unique set containing the point $z$. It is clear that the function $r_{n+1}$ is continuous and satisfies the condition $(1_{n+1})$. To check the condition $(2_{n+1})$, take any point $z\in Z$ and find a unique set  $W'\in\W_{n+1}$ containing the point $z$ and a unique set $W\in\W_{n}$ containing the set $W'$. Observe that
$$d(r_{n+1}(z),z)=d(g_{W'},z)\le d(g_{W'},z_{W'})+d(z_{W'},z)\le 2^{-n-2}+\diam(W')\le 2^{-n-2}+2^{-n-2}=2^{-n-1}.$$
To verify the condition $(3_{n+1})$, take any point $z\in Z$ and find unique sets $W\in\W_n$ and $W'\in\W_{n+1}$ containing the point $z$. Taking into account that the cover $\W_{n+1}$ refines the disjoint cover $\W_n$, we conclude that $W'\subset W$, which implies $r_{n+1}(z)=g_{W'}=g_W\cdot\e_{W'}=r_n(z)\cdot \e_{W'}\subset r_n(z)\cdot E_{n+1}$. This completes the inductive construction of the function sequence  $(r_n)_{n\in\w}$.

The continuity of $r_n$ and the separate continuity of $f$ implies that the composition $f_n=r_n\circ f:X\times Y\to G$ is separately continuous and $$\sup_{(x,y)\in X\times Y}d(f_n(x,y),f(x,y))\le \sup_{z\in Z}d(r_n(z),z)\le 2^{-n},$$ which means that the function sequence  $(f_n)_{n\in\w}$ converges uniformly to $f$.

The continuity of the group operations on $Z$ and the separate continuity of the functions $f_n,f_{n+1}$ imply that the function $g_n=f_n^{-1}f_{n+1}:X\times Y\to G$, $g_n: (x,y)\mapsto f_n^{-1}(x,y)\cdot f_{n+1}(x,y)$, is separately continuous. By the condition $(3_{n+1})$, we get $g_n(x,y)=f_n^{-1}(x,y)\cdot f_{n+1}(x,y)=r_n(f(x,y))^{-1}\cdot r_{n+1}(f(x,y))\in E_{n+1}$, which means that the image $g_n(X\times Y)$ is discrete.
By Theorem~\ref{t1}, for the separately continuous function $g_n:X\times Y\to E_{n+1}$ there is a sequence of continuous functions $(g_{n,m}:X\times Y\to E_{n+1})_{m\in\w}$, convergent to $g_n$ in the function space $S(X\times Y,Z)$.

For any numbers $n,m\in\w$ consider the continuous function $$f_{n,m}=g_{0,m}\cdot g_{1,m}\cdots g_{n,m}:X\times Y\to G.$$ Taking into account that $\lim_{m\to\infty}g_{n,m}=g_n$ for all $n$, we conclude that $\lim_{m\to \infty}f_{n,m}=g_0\cdots g_n=f_n$ for all $n\in\w$. It remains to check that $\lim_{n\to\infty}f_{n,n}=f$ in the space $S(X\times Y,G)$.

Fix any neighborhood $O_f$ of the function $f$ in the space $S(X\times Y)$. We lose no generality assuming that $O_f$ has subbasic form $[K_X\times K_Y,W]=\{g\in S(X\times Y,G):f(K_X\times K_Y)\subset W\}$ for some open set $W\subset G$ and some compact sets $K_X\subset X$, $K_Y\subset Y$ one of which is a singleton. Using the compactness of the set $f(K_X\times K_Y)\subset W$, find a number $l\in\w$ such that $f(K_X\times K_Y)\cdot B[\frac4{2^l}]\subset W$. By the convergence of the sequence $(f_{l,n})_{n\in\w}$ to the function $f_l$ in the space $S(X\times Y,G)$, there is a number $m\ge l$ such that  $\sup_{(x,y)\in K_X\times K_Y}d(f_{l,n}(x,y),f_l(x,y))\le 2^{-l}$ for all $n\ge m$. We claim that $\sup_{(x,y)\in K_X\times K_Y}d(f(x,y),f_{n,n}(x,y))\le 2^{-l}$ for all $n\ge m$. Take and number $n\ge m$ and a pair $z=(x,y)\in K_X\times K_Y$. Observe that $g_{l+1,n}(z)\cdots g_{n,n}(z)\in E_{l+2}\cdots E_{n+1}\subset B[\frac1{2^{l+1}}]\cdots B[\frac1{2^{n}}]\subset B[\frac1{2^{l}}]$. Then
$$
\begin{aligned}
d(f(z),f_{n,n}(z))&\le d(f(z),f_l(z))+d(f_l(z),f_{l,n}(z))+d(f_{l,n}(z),f_{n,n}(z))\le\\
&\le 2^{-l}+2^{-l}+d(g_{0,n}(z)\cdots g_{l,n}(z),g_{0,n}(z)\cdots g_{n,n}(z))=\\
&=2^{-l+1}+d(1_G,g_{l+1,n}(z)\cdots g_{n,n}(z))\le 2^{-l+1}+2^{-l}<2^{-l+2}
\end{aligned}
$$and hence $f_{n,n}(z)\in W$ according to the choice of $l$.
\end{proof}

Now we find conditions on a topological group $Z$ guaranteeing that the sequential closure $\bar C^s(X\times Y,Z)$ of the set $C(X\times Y,Z)$ of continuous functions in the space $S(X\times Y,Z)$ is closed in a topology of uniform convergence on $S(X\times Y,Z)$.

In fact, the space $S(X\times Y,Z)$ carries four natural topologies of uniform convergence induced by four canonical uniformities on the topological group $Z$. For describing these topologies, for any function $f\in S(X\times Y,Z)$ and any neighborhood $U\subset Z$ of the unit in the group $Z$ consider the sets
 $$
\begin{aligned}
B_l[f,U]&=\{g\in S(X\times Y,Z):\forall (x,y)\in X\times Y \;\;g(x,y)\in f(x,y)\cdot U\},\\
B_r[f,U]&=\{g\in S(X\times Y,Z):\forall (x,y)\in X\times Y \;\;g(x,y)\in  U\cdot f(x,y)\},\\
B_{lr}[f,U]&=B_l[f,U]\cap B_r[f,U]=\{g\in S(X\times Y,Z):\forall (x,y)\in X\times Y \;\;g(x,y)\in  U\cdot f(x,y)\cap f(x,y)\cdot U\},\\
B_{rl}[f,U]&=\{g\in S(X\times Y,Z):\forall (x,y)\in X\times Y \;\;g(x,y)\in  U\cdot f(x,y)\cdot U\}.
\end{aligned}
$$
We define a subset $W\subset S(X\times Y,Z)$ to be {\em $\tau_s$-open} for $s\in\{l,r,lr,rl\}$ if for any function $f\in W$ there is a neighborhood $U\subset Z$ of the unit $1_Z$ in $Z$ such that $B_s[f,U]\subset W$. The family $\tau_s$ of all $\tau_s$-open subsets of $S(X\times Y,Z)$ is a topology on $S(X\times Y,Z)$. It is clear that $\tau_{rl}\subset\tau_l\cap\tau_r$ and $\tau_l\cup\tau_r\subset\tau_{lr}$. We define a subset $F\subset S(X\times Y,Z)$ to be {\em $\tau_l\cap\tau_r$-closed} if its complement in $S(X\times Y,Z)$ belongs to the topology $\tau_l\cap\tau_r$.

For topological spaces $X,Y,Z$ by $S_0(X\times Y,Z)$ we denote the set of all separately continuous functions $f:X\times Y\to Z$ with zero-dimensional image $f(X\times Y)$. If $Z$ is a topological group, then by $\bar S^u_0(X\times Y,Z)$ we shall denote the closure of the set $S_0(X\times Y,Z)$ in the topology $\tau_l\cap\tau_r$ on the space $S(X\times Y,Z)$. By Theorem~\ref{t2}, $S_0(X\times Y,Z)\subset \bar C^s(X\times Y,Z)$ for any metrizable compact spaces $X,Y$ and any metrizable topological group $Z$. In the following theorem we give conditions on the group $Z$ guaranteeing that $\bar S^u_0(X\times Y,Z)\subset \bar C^s(X\times Y,Z)$.

We shall say that a topological space $Z$ is an {\em absolute extensor} for a topological space $X$ if any continuous map $f:A\to Z$ defined on a closed subset $A\subset X$ can be extended to a continuous map $\bar f:X\to Z$.

\begin{theorem}\label{t3} Let $X,Y$ be separable metrizable spaces and $Z$ be a metrizable topological group possessing a neighborhood base at the unit $1_Z$ consisting of absolute extensors for the space $X\times Y$. Then the set $\bar C^s(X\times Y,Z)$ is closed in the topology $\tau_{l}\cap\tau_r$ on $S(X\times Y,Z)$. Consequently,  $\bar S_0^u(X\times Y,Z)\subset \bar C^s(X\times Y,Z)$.
\end{theorem}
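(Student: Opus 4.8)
The plan is to prove that the complement of $\bar C^s(X\times Y,Z)$ in $S(X\times Y,Z)$ is open in both topologies $\tau_l$ and $\tau_r$. The inversion $\iota:f\mapsto f^{-1}$ of $Z$ induces a self-homeomorphism of $S(X\times Y,Z)$ which sends $B_l[f,U]$ onto $B_r[\iota f,U^{-1}]$ and sends $C(X\times Y,Z)$, hence $\bar C^s(X\times Y,Z)$, onto itself; consequently the complement of $\bar C^s(X\times Y,Z)$ is $\tau_l$-open if and only if it is $\tau_r$-open, and it suffices to prove it is $\tau_l$-open. Unravelling the definition, I must show: if $f\in S(X\times Y,Z)$ satisfies $B_l[f,U]\cap\bar C^s(X\times Y,Z)\ne\emptyset$ for every neighborhood $U\subset Z$ of $1_Z$, then $f\in\bar C^s(X\times Y,Z)$. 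Fix such an $f$, a bounded left-invariant metric $d$ on $Z$ with $\diam(Z)\le\tfrac12$, and write $B[\e]=\{z\in Z:d(z,1_Z)\le\e\}$.

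First I would choose, using the continuity of $(a,b)\mapsto a^{-1}b$ at $(1_Z,1_Z)$ together with the hypothesis that $1_Z$ has a neighborhood base of absolute extensors for $X\times Y$, two sequences $(O_n)_{n\in\w}$ and $(O'_n)_{n\in\w}$ of open neighborhoods of $1_Z$, each an absolute extensor for $X\times Y$, such that $O_n\subset B[2^{-n-2}]$, $O'_n\subset B[2^{-n-3}]$ and $(O'_n)^{-1}\cdot O'_{n+1}\subset O_n$ for all $n$. Using the assumption on $f$, pick $h_n\in B_l[f,O'_n]\cap\bar C^s(X\times Y,Z)$; since $d$ is left-invariant, $\sup_{(x,y)}d(h_n(x,y),f(x,y))\le 2^{-n-3}$, so $(h_n)_{n\in\w}$ converges uniformly to $f$. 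Put $\phi_0=h_0$ and, for $k\ge 1$, let $\phi_k$ be the separately continuous pointwise product $\phi_k=h_{k-1}^{-1}\cdot h_k$. A short computation shows $\phi_k(X\times Y)\subset(O'_{k-1})^{-1}\cdot O'_k\subset O_{k-1}\subset B[2^{-k-1}]$, while $\phi_0\phi_1\cdots\phi_n=h_n$ for every $n$. Since the group operations of $Z$ are continuous (so that the compact-open function spaces on the layers are topological groups), $\bar C^s(X\times Y,Z)$ is closed under the pointwise operation $(a,b)\mapsto a^{-1}b$, and hence every $\phi_k\in\bar C^s(X\times Y,Z)$.

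The main point is the following claim: if $g\in\bar C^s(X\times Y,Z)$ has image contained in an open set $O\subset Z$ which is an absolute extensor for $X\times Y$, then $g$ is the limit in $S(X\times Y,Z)$ of a sequence of continuous functions whose images remain inside $O$. To prove it I would fix continuous maps $\tilde g_m\to g$ in $S(X\times Y,Z)$ and set $C_m=\{(x,y)\in X\times Y:d(\tilde g_m(x,y),Z\setminus O)\ge\tfrac1m\}$, a closed subset of $\tilde g_m^{-1}(O)$, and $D_m=\tilde g_m^{-1}(Z\setminus O)$. For every compact set $K$ contained in a single layer $\{x\}\times Y$ or $X\times\{y\}$ the set $g(K)$ is a compact subset of the open set $O$ and $\tilde g_m\to g$ uniformly on $K$, whence $K\subset C_m$ for all large $m$. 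The map $\psi_m:C_m\cup D_m\to O$ equal to $\tilde g_m$ on $C_m$ and to $1_Z$ on $D_m$ is continuous on the closed set $C_m\cup D_m$, so the absolute-extensor property of $O$ provides a continuous extension $g_m:X\times Y\to O$; since $g_m=\tilde g_m$ on $C_m$ and every relevant compact set eventually lies in $C_m$, we get $g_m\to g$ in $S(X\times Y,Z)$. Applying this claim to each $\phi_k$ with $k\ge 1$ (taking $O=O_{k-1}$), and using $\phi_0\in\bar C^s(X\times Y,Z)$ directly, I obtain continuous functions $\phi_{k,m}\to\phi_k$ in $S(X\times Y,Z)$ with $\phi_{k,m}(X\times Y)\subset O_{k-1}$ for all $k\ge 1$.

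Finally, set $f_{n,m}=\phi_{0,m}\phi_{1,m}\cdots\phi_{n,m}\in C(X\times Y,Z)$, so that $\lim_{m\to\infty}f_{n,m}=\phi_0\cdots\phi_n=h_n$ in $S(X\times Y,Z)$ for each $n$, and run the diagonal argument from the proof of Theorem~\ref{t2}. Given a subbasic neighborhood $[K_X\times K_Y,W]$ of $f$, choose $l$ with $f(K_X\times K_Y)\cdot B[2^{-l+1}]\subset W$ and then (using that one of $K_X,K_Y$ is a singleton, so $S$-convergence is uniform on $K_X\times K_Y$) choose $m\ge l$ with $d(f_{l,n}(z),h_l(z))\le 2^{-l}$ for all $n\ge m$ and $z\in K_X\times K_Y$; then for $n\ge m$ and $z\in K_X\times K_Y$, left-invariance of $d$ and the bounds $\phi_{j,n}(z)\in O_{j-1}\subset B[2^{-j-1}]$ $(j\ge l+1)$ yield
$$d(f(z),f_{n,n}(z))\le d(f(z),h_l(z))+d(h_l(z),f_{l,n}(z))+d\bigl(1_Z,\phi_{l+1,n}(z)\cdots\phi_{n,n}(z)\bigr)\le 2^{-l-3}+2^{-l}+2^{-l-1}<2^{-l+1},$$
so $f_{n,n}(z)\in f(z)\cdot B[2^{-l+1}]\subset W$ and $f_{n,n}\in[K_X\times K_Y,W]$. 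Hence $f_{n,n}\to f$ in $S(X\times Y,Z)$, so $f\in\bar C^s(X\times Y,Z)$; this shows $\bar C^s(X\times Y,Z)$ is $\tau_l\cap\tau_r$-closed, and the inclusion $\bar S_0^u(X\times Y,Z)\subset\bar C^s(X\times Y,Z)$ then follows from Theorem~\ref{t2}. I expect the main obstacle to be the claim in the third paragraph: one has to approximate a separately continuous map with image in an absolute extensor $O$ by continuous maps whose images do not escape $O$, since this is what keeps the tail products $\phi_{l+1,n}\cdots\phi_{n,n}$ uniformly small; the metric of $Z$ is used to manufacture the closed sets $C_m$ that absorb all compact layers, after which the absolute-extensor property does the rest.
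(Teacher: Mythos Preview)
Your proposal is correct and follows essentially the same route as the paper: reduce to $\tau_l$-closedness, pick approximants $h_n\in\bar C^s$ in shrinking left neighborhoods of $f$, telescope via $\phi_k=h_{k-1}^{-1}h_k$ so that each $\phi_k$ lands in a small absolute-extensor neighborhood of $1_Z$, use the absolute-extensor property to modify continuous approximants so their images stay in that neighborhood, and finish with the diagonal argument of Theorem~\ref{t2}. The only cosmetic differences are that you invoke the inversion $f\mapsto f^{-1}$ explicitly to pass from $\tau_l$ to $\tau_r$ (the paper says ``by analogy''), and you package the key modification step as a standalone claim, building the closed domain $C_m\cup D_m$ via the distance to $Z\setminus O$ rather than via preimages of a fixed closed set as the paper does; both devices accomplish the same thing.
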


\begin{proof} It suffices to check that the set $\bar C^{s}(X\times Y,Z)$ is closed in the topologies $\tau_l$ and $\tau_r$ on the space $S(X\times Y,Z)$. First we prove that the set $\bar C^s(X\times Y,Z)$ is $\tau_l$-closed. Fix a left-invariant metric $d\le 1$ generating the topology of the metrizable topological group $Z$. Put $W_{-1}=W_0=Z$ and for every $n>0$ choose a neighborhood $W_n\subset \{z\in Z:d(1_Z,z)\le 2^{-n}\}$ of the unit $1_Z$, which is an absolute extensor for the space $X\times Y$. Replacing the sequence $(W_n)_{n\in\w}$ by a suitable subsequence, we can assume that $W_{n}^{-1}W_{n}W_{n}\subset W_{n-1}$ for all $n>0$.

Take any function  $f\in S(X\times Y,Z)$ that belongs to the $\tau_l$-closure of the set $\bar C^s(X\times Y,Z)$ in $S(X\times Y,Z)$. For every $n\in\w$ choose a function $f_n\in\bar C^s(X\times Y,Z)$ such that $f_n(x,y)\in f(x,y)\cdot W_{n+1}$ for all $(x,y)\in X\times Y$. Since $W_0=Z$ we can put $f_0:X\times Y\to\{1_Z\}$ be the constant function. For every $n\in\w$ consider the function $g_n=f_n^{-1}\cdot f_{n+1}:X\times Y\to Z$ and observe that for every $(x,y)\in X\times Y$ we get $g_n(x,y)=f_n^{-1}(x,y)\cdot f_{n+1}(x,y)\subset
W_{n}^{-1}\cdot f(x,y)^{-1}\cdot f(x,y)\cdot W_{n+1}=W_{n}^{-1}W_{n+1}\subset \overline{W_n^{-1}W_{n+1}W_{n+2}}\subset W_n^{-1}W_{n+1}W_{n+2}W_{n+3}\subset W_{n-1}$.

The continuity of the group operations on $Z$ implies that the set  $\bar C^s(X\times Y,Z)$ is a group. So, $g_n\in \bar C^s(X\times Y,Z)$ and we can choose a sequence $(g_{n,m})_{m\in\w}$ of continuous functions $g_{n,m}:X\times Y\to G$, which converges to the function $g_n$ in the space $S(X\times Y,Z)$. The functions $g_{n,m}$ can be modified so that $g_{n,m}(X\times Y)\subset W_{n-1}$. Since $W_{-1}=W_0=Z$, this is trivial for $n\le 1$. Put $\bar g_{k,m}=g_{k,m}$ for $k\le 1$, $m\in\w$. Fix any number $n>1$. By the continuity of the function $g_{n,m}$, the set $F_{n,m}=g_{n,m}^{-1}(\overline{W_n^{-1}W_{n+1}W_{n+2}})$ is closed in  $X\times Y$. Since $W_{n-1}$ is an absolute extensor for the space $X\times Y$, the function $g_{n,m}|F_{n,m}$ can be extended to a continuous function $\bar g_{n,m}:X\times Y\to W_{n-1}$. We claim that the function sequence $(\bar g_{n,m})_{m\in\w}$ converges to $g_n$. It suffices to check that for any sub-basic neighborhood $O(g_n)=[K_X\times K_Y,W]$ of the function $g_n$ there exists a number $k\in\w$ such that $\bar g_{n,m}\in O(g_n)$ for all $m\ge k$. Here $W\subset Z$ is an open set and $K_X\subset X$, $K_Y\subset Y$ are compact subsets one of which is a singleton. Using the compactness of the set $g_n(K_X\times K_Y)\subset W$, find a number $l\ge n+2$ such that $g_n(K_X\times K_Y)\cdot W_l\subset W$. The convergence of the function sequence $(g_{n,m})_{m\in\w}$ to $g_n$ yields a number $k\in\w$ such that $g_{n,m}(K_X\times K_Y)\subset  g_n(K_X\times K_Y)\cdot W_l\subset W_n^{-1}W_{n+1}W_l\subset \overline{W_n^{-1}W_{n+1}W_{n+2}}$ for all $m\ge k$. For such numbers $m$ we get the inclusion $K_X\times K_Y\subset F_{n,m}$ implying $\bar g_{n,m}(K_X\times K_Y)=g_{n,m}(K_X\times K_Y)\subset g_n(K_X\times K_Y)\cdot W_l\subset W$. Then $\bar g_{n,m}\in[K_X\times K_Y,W]$, which means that the sequence $(\bar g_{n,m})_{m\in\w}$ converges to $g_n$ in the space $S(X\times Y,Z)$.

For any $n,m\in\w$ consider the continuous function $f_{n,m}=g_{0,m}\cdots g_{n,m}:X\times Y\to Z$ and observe that $\lim_{m\to\infty}f_{n,m}=g_0\cdots g_n=f_{n+1}$ for all $n\in\w$. Repeating the argument of the proof of Theorem~\ref{t2}, we can show that the function sequence $(f_{n,n})_{n\in\w}$ converges to $f$ in the space $S(X\times Y,Z)$. Then $f\in \bar C^s(X\times Y,Z)$ and hence the set $\bar C^s(X\times Y,Z)$ is $\tau_l$-closed in $S(X\times Y,Z)$. By analogy we can prove that this set is $\tau_r$-closed.
\smallskip

The $(\tau_l\cap\tau_r)$-closedness of the set $\bar C^s(X\times Y,Z)$ and the inclusion $S_0(X\times Y,Z)\subset \bar C^s(X\times Y,Z)$ (proved in Theorem~\ref{t2}) implies the inclusion $\bar S^u_0(X\times Y,Z)\subset \bar C^s(X\times Y,Z)$.
\end{proof}

\begin{remark} For the interval $X=Y=[0,1]$ the closedness of the set $\bar C^s(X\times Y,\IR)$ in the topology of uniform convergence on $S(X\times Y,\IR)$ was proved in \cite{VMM1}.
\end{remark}

Theorem~\ref{t3} has two corollaries.

\begin{corollary}\label{c1} For zero-dimensional separable metrizable spaces $X,Y$ and any metrizable topological group $Z$ the set $\bar C^s(X\times Y,Z)$ is $(\tau_l\cap\tau_r)$-closed in $S(X\times Y,Z)$ and hence $\bar S_0^u(X\times Y,Z)\subset \bar C^s(X\times Y,Z)$.
\end{corollary}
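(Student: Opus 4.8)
The plan is to deduce Corollary~\ref{c1} directly from Theorem~\ref{t3}. For this it suffices to check that, when $X,Y$ are zero-dimensional separable metrizable spaces and $Z$ is an arbitrary metrizable topological group, the group $Z$ possesses a neighborhood base at its unit $1_Z$ consisting of absolute extensors for the space $X\times Y$. I expect to establish the formally stronger statement that \emph{every} metrizable space is an absolute extensor for $X\times Y$, after which any neighborhood of $1_Z$ (in particular a whole neighborhood base at $1_Z$) qualifies, and Theorem~\ref{t3} applies essentially verbatim.

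First I would note that $X\times Y$ is again a zero-dimensional separable metrizable space: each of $X,Y$ embeds into the Cantor cube $2^\w$, hence $X\times Y$ embeds into $2^\w\times 2^\w\cong 2^\w$, so that $\dim(X\times Y)=0$. (Alternatively one may invoke the inequality $\dim(X\times Y)\le\dim X+\dim Y$ valid for separable metrizable spaces.) Then I would appeal to the classical extension theorem of dimension theory: if $T$ is a metrizable space with $\dim T=0$ (equivalently, a strongly zero-dimensional metrizable space), then every continuous map from a closed subset $A\subset T$ into an arbitrary metrizable space extends continuously over $T$. Applied with $T=X\times Y$, this shows that every metrizable space — and in particular every neighborhood of $1_Z$ in $Z$, being a metrizable subspace of $Z$ — is an absolute extensor for $X\times Y$.

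Having verified the hypothesis of Theorem~\ref{t3}, that theorem immediately gives that $\bar C^s(X\times Y,Z)$ is $(\tau_l\cap\tau_r)$-closed in $S(X\times Y,Z)$, and hence $\bar S_0^u(X\times Y,Z)\subset\bar C^s(X\times Y,Z)$, exactly as in the final paragraph of the proof of Theorem~\ref{t3} (using the inclusion $S_0(X\times Y,Z)\subset\bar C^s(X\times Y,Z)$ supplied by Theorem~\ref{t2}). Since the whole argument is a specialization of Theorem~\ref{t3}, there is essentially no obstacle here; the one point deserving care is the verification that $\dim(X\times Y)=0$ and the pinning down of a precise reference for the zero-dimensional extension theorem, which is exactly what powers the construction of the extensions $\bar g_{n,m}$ inside the proof of Theorem~\ref{t3}.
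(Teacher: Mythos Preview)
Your proposal is correct and follows essentially the same route as the paper: verify that $X\times Y$ is zero-dimensional and that this makes every (relevant) space an absolute extensor for $X\times Y$, then invoke Theorem~\ref{t3}. The paper phrases the key step slightly more cleanly---the zero-dimensionality of $X\times Y$ implies that every closed subset of $X\times Y$ is a retract, so \emph{every} topological space (not just metrizable ones) is an absolute extensor for $X\times Y$---but this is the same idea as your extension theorem and leads to the identical conclusion.
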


\begin{proof} The zero-dimensionality of the space $X\times Y$ implies that each closed subset of $X\times Y$ is a retract of the space $X\times Y$. So, each topological space is an absolute extensor for the space $X\times Y$, which allows us to apply Theorem~\ref{t3} and complete the proof.
\end{proof}

\begin{corollary} For separable metrizable spaces $X,Y$ and a locally convex linear metric space $Z$ the set $\bar C^s(X\times Y,Z)$ is $(\tau_l\cap\tau_r)$-closed in $S(X\times Y,Z)$ and hence $\bar S_0^u(X\times Y,Z)\subset \bar C^s(X\times Y,Z)$.
\end{corollary}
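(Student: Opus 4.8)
The plan is simply to apply Theorem~\ref{t3}. Since $Z$ is a linear metric space, its additive group is a metrizable abelian topological group, so on $S(X\times Y,Z)$ the two topologies of uniform convergence coincide, $\tau_l=\tau_r$, and it suffices to exhibit a neighborhood base at the unit $1_Z=0$ consisting of absolute extensors for the space $X\times Y$.

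First I would observe that $X\times Y$, being the product of two separable metrizable spaces, is itself separable metrizable, hence metrizable. Next, by the local convexity of $Z$, the origin has a neighborhood base $\{U_\alpha\}$ consisting of convex sets; replacing each $U_\alpha$ by its interior (which is again convex and is still a neighborhood of $0$) we may assume that each $U_\alpha$ is open and convex. The key input is Dugundji's Extension Theorem: for a metrizable space $T$, a closed subset $A\subset T$, a locally convex linear metric space $Z$ and a convex subset $C\subset Z$, every continuous map $g\colon A\to C$ extends to a continuous map $\bar g\colon T\to C$ (one applies the Dugundji extension into $Z$ and observes that its values lie in the convex hull of $g(A)$, hence in $C$). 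Applying this with $T=X\times Y$ shows that each convex neighborhood $U_\alpha$ of $0$ is an absolute extensor for $X\times Y$, so $Z$ possesses a neighborhood base at $1_Z$ consisting of absolute extensors for $X\times Y$.

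Therefore $Z$ satisfies the hypotheses of Theorem~\ref{t3}, which yields that $\bar C^s(X\times Y,Z)$ is $(\tau_l\cap\tau_r)$-closed in $S(X\times Y,Z)$; combined with the inclusion $S_0(X\times Y,Z)\subset\bar C^s(X\times Y,Z)$ established in Theorem~\ref{t2}, this gives $\bar S_0^u(X\times Y,Z)\subset\bar C^s(X\times Y,Z)$. The only point that requires a little care is the appeal to Dugundji's theorem: one must be sure it is available for maps into a convex subset of a locally convex metric linear space that is neither assumed complete nor normable, but this is exactly the classical statement of the theorem, so I do not expect a genuine obstacle here.
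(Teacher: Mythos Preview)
Your proof is correct and follows essentially the same route as the paper: invoke Dugundji's theorem to see that convex neighborhoods of $0$ are absolute extensors for the metrizable space $X\times Y$, and then apply Theorem~\ref{t3}. Your extra observations (that $\tau_l=\tau_r$ by commutativity, and the care taken with the exact hypotheses of Dugundji's theorem) are accurate but not needed beyond what the paper uses.
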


\begin{proof} By the Dugundji Theorem \cite{Dug}, any convex subset of a locally convex space is an absolute extensor for any metrizable space. Consequently, the locally convex space $Z$ has a neighborhood base of (convex) neighborhoods of zero which are absolute extensors for the space $X\times Y$. This allows us to apply Theorem~\ref{t3} and complete the proof of the corollary.
\end{proof}

Observe that an affirmative answer to the ``zero-dimensional'' part of Problem~\ref{prob1} can be derived from Corollary~\ref{c1} and the affirmative answer to the following problem.

\begin{problem} Let $X,Y$ be zero-dimensional compact spaces. Is the set $S_0(X\times Y,\IR)$ of separately continuous functions $f:X\times Y\to \IR$ with zero-dimensional image $f(X\times Y)$ dense in the space $S(X\times Y,\IR)$ endowed with the topology of uniform convergence $\tau_l=\tau_r$?
\end{problem}

This problem can be equivalently reformulated in more elementary terms.

\begin{problem} Let $f:X\times Y\to \IR$ be a separately continuous function defined on the product of two Cantor sets $X=Y=\{0,1\}^\w$. Is there a separately continuous function $g:X\times Y\to\IR$ with (countable) zero-dimensional image $g(X\times Y)$ such that $\sup_{(x,y)\in X\times Y}|f(x,y)-g(x,y)|\le 1$?
\end{problem}

\smallskip

\end{document}